\def\op{\operatorname}
\newcommand{\Z}{\mathbb Z}
\newcommand{\Aut}{\op{Aut}}
\newcommand*{\email}[1]{\href{mailto:#1}{\nolinkurl{#1}}}
\newtheorem{theorem}{Theorem}
\newtheorem{lemma}[theorem]{Lemma}
\newtheorem{algorithm}[theorem]{Algorithm}
\newtheorem{question}[theorem]{Question}
\theoremstyle{definition}
\newtheorem{definition}[theorem]{Definition}
\theoremstyle{remark}
\newtheorem*{remark}{Remark}
\title{Graph coverings and (im)primitive homology: some new examples of exceptionally low degree}
\date{\today}
\author{Destine Lee, Iris Rosenblum-Sellers, Jakwanul Safin, Anda Tenie}
\begin{document}

\maketitle

\begin{abstract}
Given a finite covering of graphs $f : Y \to X$, it is not always the case that $H_1(Y;\mathbb{C})$ is spanned by lifts of primitive elements of $\pi_1(X)$. In this paper, we study graphs for which this is not the case, and we give here the simplest known nontrivial examples of covers with this property, with covering degree as small as 128. Our first step is focusing our attention on the special class of graph covers where the deck group is a finite $p$-group. For such covers, there is a representation-theoretic criterion for identifying deck groups for which there exist covers with the property. We present an algorithm for determining if a finite $p$-group satisfies this criterion that uses only the character table of the group. Finally, we provide a complete census of all finite $p$-groups of rank $\geq 3$ and order $< 1000$ satisfying this criterion, all of which are new examples.
\end{abstract}

\section{Introduction}
Given a graph cover $Y$ of a finite graph $X$ with finite deck group $G$, there is a $G$-action on $H_1(Y;\mathbb{C})$ determined by the induced map on homology of each homeomorphism of $Y$ corresponding to a deck transformation. $H_1(Y;\mathbb{C})$ has then the structure of a $G$-representation. Following the work of Farb and Hensel \cite{MR3576293}, we now define the \textit{primitive homology} of $Y$ as a $G$-subrepresentation of $H_1(Y;\mathbb{C})$ and determine cases where this subrepresentation is strict.

We say an element $x$ of a free group $F_n$ is \textit{primitive} if it is part of a free basis. In other words, for some set $\{x, b_1, b_2,\dots, b_{n-1}\}$ we have $F_n=\langle x, b_1, b_2, \dots, b_{n-1} \rangle$. Equivalently, $x$ is primitive if there exists an automorphism $\alpha: F_n \rightarrow F_n$, such that $\alpha(a)=x$ for some standard generator $a$ of $F_n$. For a finite group $G$, we say an element is \textit{primitive} with respect to a particular surjection $\phi: F_n \rightarrow G$ if it is the image of a primitive element under $\phi$, where $n$ is the size of a minimal generating set for $G$, i.e., the \emph{rank} of $G$. We show in Section \ref*{comp} that for finite $p$-groups (the class of groups under study in this paper) the set of primitive elements is independent of the choice of surjection.

\begin{remark}
We impose the restriction on $n$ because when $n$ exceeds the rank of $G$, we often have a primitive element in the kernel which, as shown by Farb and Hensel \cite{MR3576293}, never leads to examples with the property under study in this paper.
\end{remark}

\begin{definition}[Primitive Homology]
Given a graph cover $Y$ of a finite graph $X$ we define the \emph{primitive homology} of Y, denoted by $H_1^\mathrm{prim}(Y;\mathbb{C}),$ to be the $\mathbb{C}$-span of components coming from lifts of primitive elements of the fundamental group of $X$. 
\end{definition}

Note that this is a $G$-subrepresentation of $H_1(Y;\mathbb{C})$. It is then natural to ask whether this subrepresentation is strict.
\begin{question}
Is $H_1^\mathrm{prim}(Y;\mathbb{C})=H_1(Y;\mathbb{C})$ for every normal cover Y of X?
\end{question}
Farb and Hensel reformulate this question in a purely group theoretic way. Recall the Gasch\"{u}tz formula:
\begin{equation*}
    H_1(Y;\mathbb{C}) \cong \mathbb{C}[G]^{n-1} \oplus \mathbb{C},
\end{equation*}
where $\mathbb{C}[G]$ denotes the regular representation and $n$ is the rank of the free group $\pi_1(X)$. In particular, the $G$-representation corresponding to the action of $G$ on $H_1(Y;\mathbb{C})$ contains every irreducible representation of $G$ as a subrepresentation.

Using this fact, we now define $\op{Irr}^{\op{pr}}(\phi, G),$ an essential piece in translating our topological question into an algebraic one.

\begin{definition}[Primitive Irreducible Representation]
For a finite group $G$ and surjective homomorphism $\phi : F_n \rightarrow G$ (where $n$ is the rank of $G$), the \emph{primitive irreducible representations} $\op{Irr}^{\mathrm{pr}}(\phi, G)$ of $G$ are the irreducible representations $\rho: G \rightarrow \Aut(V)$ such that $\rho(\gamma)(v)=v$ for some primitive $\gamma \in G$ (with respect to $\phi$) and some nonzero vector $v \in V$. Conversely, we say an irreducible representation is imprimitive if it does not belong to $\op{Irr}^{\mathrm{pr}}(\phi, G)$.
\end{definition}
Malestein and Putman \cite{MR4012346} as well as Farb and Hensel \cite{MR3576293} showed that
\[
    H_1^{\mathrm{prim}} (Y;\mathbb{C}) \subseteq \left( \bigoplus_{V_i\in \op{Irr}^{\mathrm{pr}}(\phi, G)} V_i^{(n-1)\op{dim}(V_i)} \right) \oplus \mathbb{C}_{\mathrm{triv}}.
\]
It follows that if there exists an imprimitive irreducible representation of $G$, i.e., if $\op{Irr}^{\text{pr}}(\phi, G) \neq \op{Irr}(G)$, then $H_1^{\text{prim}}(Y;\mathbb{C})$ is a strict subrepresentation of $H_1(Y;\mathbb{C})$.

It is usually the case that $\op{Irr}^{\op{pr}}(\phi, G)=\op{Irr}(G)$ for all surjections $\phi:F_n\to G$, but not always the case; an example of a group without this property for $n=2$ is the group $Q_8$ of unit quaternions. This paper is concerned with finding examples for larger $n$, i.e., groups $G$ of rank $\geq 3$ such that $\op{Irr}^{\op{pr}}(\phi, G) \neq \op{Irr}(G)$ for some surjection $\phi: F_n \rightarrow G$. We say such groups have Property \textit{II}.

\begin{definition}[Property \textit{II}]
    Given a finite group \(G\) and a surjective homomorphism \(\phi:F_n \rightarrow G\), we say G is \emph{imprimitive irreducible} under \(\phi\) (Property \(II_{\phi}\)) if there exists an imprimitive representation of \(G\), that is, a representation of G with the property that $\phi(x)(v)\neq v$ for all primitive $x \in F_n$ and nonzero $v \in V$. Moreover, we say that $G$ has Property \textit{II} if $G$ has Property \textit{II}$_\phi$ for some $\phi$.
\end{definition}

In the discussion to follow, we work in the special case where $G$ is a $p$-group. The implications of this assumption are explored in Section \ref{comp}, where in particular we exploit this assumption to develop an efficient computational approach to finding groups with Property \textit{II}. In Section \ref{imofprim}, we show that one can view the set of primitive elements of a $p$-group as the complement of the Frattini subgroup. In Section \ref{thealg}, we develop an algorithm that uses the character table of a $p$-group to determine this complement. In addition, we show that it can even be determined from the character table whether a $p$-group has Property \textit{II}. In Section \ref{results} we present the groups with Property \textit{II} that our search uncovered. The following theorem summarizes our findings.
\begin{theorem}
There is one group (and only one group) of rank 3 and order 128 with Property $II$. In addition, there are 7 groups of order 256 and rank $\geq 3$ and 165 groups of order 512 and rank $\geq 3$ with Property $II$; all of these groups are of rank 3.
\end{theorem}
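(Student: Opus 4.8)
The plan is to prove this theorem by exhaustive computation: we run the algorithm of Section \ref{thealg} on a complete list of all finite $p$-groups of rank $\geq 3$ and order $< 1000$, and count those that the algorithm certifies as having Property \textit{II}. The key simplification that makes this a finite, well-posed search is the result of Section \ref{comp}: for a $p$-group the set of primitive elements does not depend on the chosen surjection $\phi : F_n \to G$, so Property $II_\phi$ holds for one $\phi$ if and only if it holds for all of them. Consequently Property \textit{II} is a genuine invariant of $G$, and a single application of the algorithm decides it; we never have to range over the (infinitely many) surjections.

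First I would fix the enumeration. Since rank $\geq 3$ forces the order to be at least $p^3$, only the primes $p \in \{2,3,5,7\}$ can occur below $1000$, and the candidate orders are exactly $8,16,32,64,128,256,512$ (for $p=2$), $27,81,243,729$ (for $p=3$), $125,625$ (for $p=5$), and $343$ (for $p=7$). Every isomorphism type at each of these orders is available from the SmallGroups library. For each group I would compute the rank as $\dim_{\mathbb F_p} G/\Phi(G)$ and discard those of rank $\leq 2$; for the survivors I would compute the character table together with the Frattini subgroup $\Phi(G)$. By Section \ref{imofprim} the primitive elements are exactly the complement $G \setminus \Phi(G)$, so the input data required by the Section \ref{thealg} algorithm is completely determined by these two pieces of information. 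Running the algorithm flags each irreducible representation as primitive or imprimitive, and $G$ has Property \textit{II} precisely when at least one flag is imprimitive.

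The hard part will be the sheer scale of the search at order $512 = 2^9$, where there are more than ten million isomorphism types of group. Even after restricting to rank $\geq 3$ the pool is enormous, so the computation must be organized to avoid building a full character table for every group. I would first filter on rank, which is comparatively cheap, and only then invoke the (more expensive) character-theoretic test on the survivors, rejecting the common case $\op{Irr}^{\op{pr}}(\phi,G)=\op{Irr}(G)$ as early as the structural shortcuts of Section \ref{thealg} allow. The remaining orders are small by comparison and can be checked directly; in particular the search must confirm that \emph{no} group of rank $\geq 3$ and order below $128$, and no group of odd prime-power order in range, has Property \textit{II}, since this is exactly what pins down $128$ as the minimal covering degree reported in the abstract.

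Finally, I would cross-check the positive examples. Each of the $1+7+165 = 173$ groups flagged with Property \textit{II} should be re-examined by an independent route --- for instance by directly exhibiting an imprimitive irreducible representation, namely an irreducible $\rho$ for which no element of $G \setminus \Phi(G)$ admits $1$ as an eigenvalue --- to guard against implementation error, and I would record for each its SmallGroups identifier. The observation that all $173$ groups turn out to have rank exactly $3$, despite the search ranging over all rank $\geq 3$, is then simply read off from the completed census.
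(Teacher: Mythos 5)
Your proposal matches the paper's own proof in substance: the theorem is established by exactly this exhaustive computational census, running the character-table algorithm of Section \ref{thealg} over the GAP SmallGroups library for all $p$-groups of rank $\geq 3$ and order below $1000$, with the surjection-independence of primitivity (Lemma \ref{byesurj}) making Property \textit{II} decidable by a single run per group. The only immaterial difference is that you identify primitive elements by computing $\Phi(G)$ directly from the group, whereas the paper uses the linear-character criterion (Lemma \ref{findprim}) so that the entire test can be executed from the character table and power map alone.
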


\begin{remark}
For context, there are in total 883 groups of rank 3 and order 128, 6,190 of rank 3 and order 256, and 58,859 of rank 3 and order 512.
\end{remark}

Finally, in Section \ref{g128}, we provide a description of the group of order 128, which we denote as $G_{128}$, and demonstrate using the algorithm developed in Section \ref{thealg} that it indeed has Property \textit{II}.

\section{Theoretical Framework} \label{comp}

In \cite{MR3576293}, Farb and Hensel ask which groups $G$ have $\op{Irr}^\text{pr}(\phi, g) = \op{Irr}(G)$ for every surjective homomorphism $\phi : F_n \twoheadrightarrow G$. In the language defined in the introduction, this is equivalent to asking which groups $G$ do not have Property \textit{II}. Farb and Hensel provide a partial answer to this question. Namely, they show that no abelian groups of rank $\geq 2$ have Property \textit{II}, and that no $2$-step nilpotent groups of rank $\geq 3$ have Property \textit{II}. That is, for rank $\geq 3$, there are no abelian or $2$-step nilpotent examples of Property \textit{II}. On the other hand, they were able to produce 2-step nilpotent examples for rank 2. This invites the following question: For rank $\geq 3$, are there examples of Property \textit{II}, and if so, what is the minimal nilpotence degree required to see such examples? In this section, we take a computational approach to answering this question and show this minimal nilpotence degree to be $3$.

For the remainder of our discussion, we assume that $G$ is a finite $p$-group for some prime $p$. As shall be demonstrated, this hypothesis renders the problem especially amenable to computation, and as a matter of fact, whether $G$ has Property \textit{II} may be deduced solely using the character table. Our team implemented an algorithm for doing so, and using it, ran a search through all $p$-groups of order $\leq 1000$ and of rank $\geq 3$; this search yielded 173 groups with Property \textit{II}, all of which were of rank 3, and all of which were $2$-groups. Specifically, these groups were of order 128, 256, and 512, with only one of order 128. In the sequel, we denote this group of order 128 by $G_{128}$.

It is worth emphasizing that our hypothesis is crucial for the performance of the algorithm, for it allows us to circumvent the computationally expensive process of enumerating images of primitive elements. We begin this section by discussing exactly how our hypothesis enables us to do so. After this, we present the algorithm we used, and finally, we provide an overview of the 173 groups found followed by a description of $G_{128}$ as a metabelian extension of $\mathbb{Z}_2 \times \mathbb{Z}_2 \times \mathbb{Z}_2$ by $\mathbb{Z}_4 \times \mathbb{Z}_2 \times \mathbb{Z}_2$.

\subsection{Images of Primitive Elements} \label{imofprim}
We start by recalling a few group-theoretic preliminaries.

\begin{definition}[Frattini Subgroup]
The \emph{Frattini subgroup} $\Phi(G)$ of a group $G$ is the intersection of all maximal proper subgroups of $G$.
\end{definition}

Because any automorphism of $G$ will only permute its maximal subgroups, $\Phi(G)$ is actually a characteristic subgroup of $G$. In particular, $\Phi(G)$ is a normal subgroup, so the quotient group $G / \Phi(G)$ is well-defined. The following result relates generating sets for $G$ to generating sets for $G / \Phi(G)$.

\begin{lemma}[Burnside Basis Theorem]
Let $p$ be a prime and suppose that $G$ is a $p$-group. Then $V = G / \Phi(G)$ is an $\mathbb{F}_p$-vector space, and a set $S \subseteq G$ generates $G$ if and only if its image in $V$ spans $V$. Furthermore, $S$ is a minimal generating set if and only if this image is a basis for $V$.
\end{lemma}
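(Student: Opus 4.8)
The plan is to reduce all three assertions to the structural identification $\Phi(G) = G^p[G,G]$ for a finite $p$-group $G$, after which the vector-space structure and both equivalences follow quickly. First I would establish that every maximal subgroup $M$ of a finite $p$-group has index $p$ and is normal. Normality follows from the fact that in a $p$-group every proper subgroup is strictly contained in its normalizer (the standard consequence of the nontriviality of centers of $p$-groups applied inductively); since $M$ is maximal, this forces $N_G(M) = G$. Then $G/M$ is a nontrivial $p$-group whose only proper subgroup is trivial, by maximality of $M$, so $G/M \cong \mathbb{Z}/p\mathbb{Z}$ and $[G:M] = p$.

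Next I would prove $\Phi(G) = G^p[G,G]$, writing $N := G^p[G,G]$. Since each maximal $M$ is normal with $G/M \cong \mathbb{Z}/p\mathbb{Z}$ abelian of exponent $p$, both $[G,G]$ and every $p$-th power lie in $M$; hence $N \le M$ for all maximal $M$, giving $N \le \Phi(G)$. Conversely, $G/N$ is abelian of exponent $p$, i.e. an elementary abelian $p$-group, so it is an $\mathbb{F}_p$-vector space whose maximal subgroups are precisely its hyperplanes, and the intersection of all hyperplanes is zero. Pulling these hyperplanes back to $G$ produces maximal subgroups of $G$, all containing $N$, whose intersection is exactly $N$; therefore $\Phi(G) \le N$. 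This equality simultaneously settles the first assertion, since $V = G/\Phi(G) = G/N$ is then an $\mathbb{F}_p$-vector space.

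For the generation criterion, the forward direction is immediate: if $S$ generates $G$ then its image generates the abelian group $V$, and generating an $\mathbb{F}_p$-vector space coincides with spanning it (every $\mathbb{F}_p$-scalar multiple is an iterated sum). The reverse direction is the crux and rests on the non-generator property of the Frattini subgroup. If the image of $S$ spans $V$, then $\langle S \rangle \Phi(G) = G$, and I claim this forces $\langle S \rangle = G$: were $\langle S \rangle$ proper it would lie in some maximal subgroup $M$, but $\Phi(G) \le M$ as well, so $\langle S \rangle \Phi(G) \le M < G$, a contradiction.

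Finally, the statement about minimal generating sets is linear algebra layered on top of the criterion. If $S$ is minimal under inclusion among generating sets, then no element's image can be redundant---deleting a redundant element would leave a spanning image and hence, by the criterion, a generating subset, contradicting minimality---so the image is linearly independent and spanning, i.e. a basis, and the same argument shows $S \to V$ is injective, whence $|S| = \dim V$. Conversely, if the image is a basis, then $S$ generates by the criterion, while removing any element destroys spanning and therefore generation, so $S$ is minimal. I expect the main obstacle to be the first two steps: everything hinges on the nonobvious fact that maximal subgroups of a $p$-group are normal of index $p$, together with the resulting identification of $\Phi(G)$, since without these neither the vector-space structure of $V$ nor the non-generator argument is available.
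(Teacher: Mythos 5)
Your proof is correct, but there is nothing in the paper to compare it against: the paper states the Burnside Basis Theorem as a known classical result and gives no proof of it, immediately using it to define the rank of a $p$-group and to prove Lemma \ref{byesurj}. What you have written is the standard textbook argument, and all the key steps are sound: maximal subgroups of a finite $p$-group are normal of index $p$ (via the normalizer condition), hence $\Phi(G) = G^p[G,G]$ by the two-sided containment you give, hence $V = G/\Phi(G)$ is elementary abelian and so an $\mathbb{F}_p$-vector space; the generation criterion then follows from the non-generator property of $\Phi(G)$ (a proper subgroup times $\Phi(G)$ stays inside some maximal subgroup), and the basis statement is linear algebra on top of that. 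One small point worth noting: your argument, like the paper's statement, implicitly assumes $G$ is \emph{finite} (you need every proper subgroup to lie in a maximal one, and the normalizer/induction argument), which is consistent with the paper's standing hypothesis that all $p$-groups under consideration are finite. The only pedantic gap is in the final equivalence: if $S$ maps non-injectively onto a basis of $V$, then $S$ generates but is not minimal, so strictly one should say the elements of $S$ map bijectively onto a basis; you address this for the forward direction (minimality forces injectivity) but the converse as stated inherits the same harmless looseness present in the lemma itself.
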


\begin{definition}[Rank of a Finite $p$-group]
Let $p$ be a prime and suppose that $G$ is a finite $p$-group. Then the \emph{rank} of $G$ is the dimension of $V = G / \Phi(G)$ as an $\mathbb{F}_p$-vector space. By the Burnside Basis Theorem, this is also the size of a minimal generating set of $G$.
\end{definition}

Fix a finite $p$-group $G$, and fix any element $g_1 \in G - \Phi(G)$. Then because $V$ is a vector space and $\pi(g_1) \neq 0$, the collection $\{\pi(g_1)\}$ may be extended to a basis for $V$. Let $\{\pi(g_1), v_2, \dots, v_n\}$ be any such basis. By choosing one element $g_i \in G$ for each element $v_i$ such that $\pi(g_i) = v_i$, we obtain, according to the Burnside Basis Theorem, a minimal generating set $S = \{g_1, \dots, g_n\}$ of $G$. This leads us to the following lemma.

\begin{lemma}[Imprimitivity is independent of surjection] \label{byesurj}
Let $p$ be a prime and suppose that $G$ is a $p$-group. Furthermore, put $n$ to be the rank of $G$. Then the subset $S \subseteq G$ of primitive elements is precisely the complement $G - \Phi(G)$ of the Frattini subgroup in $G$.
\end{lemma}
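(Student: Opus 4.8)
The plan is to prove both inclusions of the set equality $S = G - \Phi(G)$ for an \emph{arbitrary} surjection $\phi : F_n \to G$ with $n$ the rank of $G$; since the right-hand side makes no reference to $\phi$, this simultaneously establishes that $S$ is independent of the chosen surjection. Throughout I would use the characterization recalled in the introduction that the primitive elements of $F_n$ are exactly those $x$ that can be completed to a free basis $\{x, b_1, \dots, b_{n-1}\}$; consequently the primitive elements of $G$ with respect to $\phi$ are precisely the elements $\phi(x)$ for such $x$. I would write $\pi : G \to V = G/\Phi(G)$ for the quotient map onto the $\mathbb{F}_p$-vector space of the Burnside Basis Theorem.

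For the inclusion $S \subseteq G - \Phi(G)$, suppose $g = \phi(x)$ with $x$ primitive, and complete $x$ to a free basis $\{x, b_1, \dots, b_{n-1}\}$ of $F_n$. Since $\phi$ is surjective, the elements $\phi(x), \phi(b_1), \dots, \phi(b_{n-1})$ generate $G$; being $n$ generators of a group of rank $n$, they form a minimal generating set, so by the Burnside Basis Theorem their images under $\pi$ form a basis of $V$. In particular $\pi(g) = \pi(\phi(x)) \neq 0$, i.e.\ $g \notin \Phi(G)$. This step uses $n = \operatorname{rank}(G)$ in an essential way: were $n$ larger, a primitive element could lie in $\ker\phi$ and hence map to the identity, which lies in $\Phi(G)$, breaking the inclusion (this is the phenomenon flagged in the earlier Remark).

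For the reverse inclusion $G - \Phi(G) \subseteq S$, fix $g \notin \Phi(G)$, so $\pi(g) \neq 0$. Since $V$ is a vector space I would extend $\{\pi(g)\}$ to a basis $\{\pi(g), v_2, \dots, v_n\}$ and lift it, exactly as in the discussion preceding the lemma, to a minimal generating set $\{g, g_2, \dots, g_n\}$ of $G$. It then remains to realize this generating tuple as the $\phi$-image of a free basis whose first element maps to $g$: given a basis $\{x, x_2, \dots, x_n\}$ of $F_n$ with $\phi(x) = g$, the element $x$ is primitive and witnesses $g \in S$.

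The main obstacle is this last lifting step, and I would isolate it as the crux. The vector-space argument only controls matters modulo $\Phi(G)$: it is elementary to produce a primitive $y \in F_n$ with $\phi(y) \equiv g \pmod{\Phi(G)}$ — for instance by realizing, as an automorphism of $F_n$, a suitable element of $GL_n(\mathbb{Z})$ built from the coordinates of $\pi(g)$ relative to the basis $\{\pi(\phi(a_i))\}$ of $V$ determined by the standard generators $a_1, \dots, a_n$ of $F_n$. Matching $g$ \emph{exactly}, however, is precisely the content of Gasch\"utz's lemma on lifting generating tuples through an epimorphism from a free group. Concretely, I would invoke the fact that for a surjection $\phi : F_n \to G$ with $n \geq d(G)$, every generating $n$-tuple of $G$ is the image of some free basis of $F_n$; applying this to $(g, g_2, \dots, g_n)$ yields the desired basis and completes the proof.
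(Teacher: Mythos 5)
Your first inclusion, $S \subseteq G - \Phi(G)$, is fine and is essentially the paper's own argument: the $\phi$-images of a free basis form a minimal generating set, so by the Burnside Basis Theorem none of them lies in $\Phi(G)$. The genuine gap is in the reverse inclusion, at exactly the step you isolate as the crux. The statement you attribute to Gasch\"utz --- that for a surjection $\phi : F_n \to G$ with $n$ at least the rank $d(G)$, \emph{every} generating $n$-tuple of $G$ is the image of some free basis of $F_n$ --- is false in the case you need, $n = d(G)$. Gasch\"utz's lemma lifts generating tuples through an epimorphism $\Gamma \to G$ whose source $\Gamma$ is \emph{finite} (or profinite); its proof is a counting argument over the finite kernel and does not transfer to the discrete free group $F_n$. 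Concretely, take the rank-$3$ $p$-group $G = (\Z/5)^3$, let $\phi : F_3 \to G$ send a free basis $a_1, a_2, a_3$ to $e_1, e_2, e_3$, and consider the generating tuple $(2e_1, e_2, e_3)$. Any free basis of $F_3$ abelianizes to a $\Z$-basis of $\Z^3$, so its $\phi$-image is a basis of $\mathbb{F}_5^3$ whose matrix relative to $(e_1,e_2,e_3)$ is the mod-$5$ reduction of a matrix in $\op{GL}_3(\Z)$, hence has determinant $\pm 1$; the tuple $(2e_1, e_2, e_3)$ has determinant $2 \not\equiv \pm 1 \pmod 5$, so it is the image of no free basis. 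Since $\{2e_1, e_2, e_3\}$ is a perfectly legitimate output of your completion-and-lift step applied to $g = 2e_1 \notin \Phi(G) = 0$, your argument fails on it. (Rank $1$ shows the per-surjection statement you are aiming at can fail outright: under $F_1 = \Z \to \Z/5$, $a \mapsto 1$, the images of primitive elements are only $\pm 1$, yet $\Phi(\Z/5) = 0$.)

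The paper sidesteps all of this by never fixing $\phi$ in this direction: given $g \notin \Phi(G)$, it completes $g$ to a minimal generating set $\{g, g_2, \dots, g_n\}$ and then \emph{defines} a surjection $\phi$ by $a_1 \mapsto g$, $a_i \mapsto g_i$, so that $g$ is visibly the image of the primitive element $a_1$. That proves $g$ is primitive with respect to \emph{some} surjection, which is how the lemma's set $S$ must be read, and which matches the existential quantifier over $\phi$ in the definition of Property \textit{II}. Your stronger per-surjection claim cannot be rescued just by choosing the completion $(g_2, \dots, g_n)$ so that the determinant above becomes $\pm 1$: whether a given minimal generating tuple lifts through a \emph{fixed} $\phi$ to a free basis is precisely a question about Nielsen equivalence classes of minimal generating tuples of $p$-groups, and it does not follow from Gasch\"utz's lemma for any $p$ (even for $p = 2, 3$, where the determinant obstruction vanishes). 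The minimal repair of your write-up is to do what the paper does: construct $\phi$ rather than receive it.
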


\begin{proof}
Fix any $g_1 \in G - \Phi(G)$. It follows from the preceding discussion that $\{g_1\}$ may be extended to a minimal generating set $\{g_1, \dots, g_n\}$ for $G$.

Choose any free basis $\{a_1, \dots, a_n\}$ for $F_n$ and define $\phi : F_n \rightarrow G$ to be the unique homomorphism mapping $a_i$ to $g_i$ for every $1 \leq i \leq n$. Because the image of $\phi$ contains a generating set, $\phi$ is surjective, and this in turn implies that $g_1$ is the image of $a_1$, a primitive element in $F_n$, under a surjective homomorphism. Since $n$ is the rank of $G$, this proves that $g_1$ is primitive.

Conversely, suppose $g_1 \in \Phi(G)$. Then $\pi(g_1) = 0$ is not an element of any basis for $G / \Phi(G)$, so by the Burnside Basis Theorem, $g_1$ is not contained in any minimal generating set for $G$. Let $n$ be the rank of $G$, and let $\{a_1,\dots,a_n\}$ be any free basis for $F_n$. Then a homomorphism $\phi : F_n \rightarrow G$ is surjective if and only if $\{\phi(a_1),\dots,\phi(a_n)\}$ is a minimal generating set for $G$. Hence, if $\phi$ is surjective, then $g_1$ cannot be the image any $a_i$.
\end{proof}

We have thus shown that, under our hypothesis, the problem of determining the primitive elements of $G$ reduces to taking a set-theoretic complement. Also notice that the set of primitive elements of $G$ is now independent of the choice of surjection $\phi$, i.e., primitivity of an element is now an intrinsic notion. This allows us to check for Property \textit{II}$_\phi$ for all surjections $\phi$ simultaneously. Of course, there now instead arises the problem of actually computing $\Phi(G)$, but when compared with the general case, our current position is advantageous because the Frattini subgroup---a union of conjugacy classes---can be computed from the character table of $G$ alone. The following subsection outlines an algorithm for doing so.

\subsection{The Algorithm}\label{thealg}

In this subsection, we present an algorithm for determining whether a group has Property \textit{II} using little more than the character table of the group. At the heart of this algorithm is the following result.

\begin{lemma} \label{findimprim}
A finite $p$-group $G$ has Property \textit{II} if and only if there exists an irreducible $G$-representation $V$ with character $\chi_V : G \rightarrow \mathbb{C}$ such that for all primitive $g \in G$,
\begin{equation}
    \sum_{i=1}^{\op{ord}(g)}\chi_{V} (g^i)=0.
\end{equation}
\end{lemma}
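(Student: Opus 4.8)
The plan is to translate the representation-theoretic definition of Property \textit{II} into the stated character identity by way of the classical formula for the dimension of a fixed subspace. The first step is to use Lemma \ref{byesurj} to remove the dependence on the surjection $\phi$: since the primitive elements of $G$ are exactly the elements of $G - \Phi(G)$ regardless of $\phi$, the assertion that an irreducible representation $(\rho, V)$ is imprimitive---that $\rho(g)$ fixes no nonzero vector for every primitive $g$---does not depend on $\phi$ either. Hence $G$ has Property \textit{II} if and only if there is an irreducible $(\rho, V)$ with trivial fixed subspace $V^g = \{0\}$ for every $g \in G - \Phi(G)$, and it suffices to characterize this intrinsic condition.

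The crux is a short character computation that I would carry out next. For $g \in G$ of order $m = \op{ord}(g)$, a vector is fixed by $\rho(g)$ precisely when it is fixed by the entire cyclic group $\langle g \rangle$, so $V^g$ equals the space of $\langle g \rangle$-invariants. The averaging operator $\tfrac{1}{m}\sum_{i=1}^{m}\rho(g^i)$ (note $g^m = e$) is then the idempotent projection onto $V^g$, and taking its trace yields
\[
\dim V^g = \frac{1}{m}\sum_{i=1}^{m}\chi_V(g^i).
\]
Because $\dim V^g$ is a nonnegative integer, it equals zero if and only if $\sum_{i=1}^{m}\chi_V(g^i) = 0$. Thus $\rho(g)$ has no nonzero fixed vector exactly when this character sum vanishes.

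Combining the two steps completes the argument: $G$ has Property \textit{II} iff some irreducible $V$ satisfies $V^g = \{0\}$ for all primitive $g$, which by the trace formula is iff $\sum_{i=1}^{\op{ord}(g)}\chi_V(g^i) = 0$ for every $g \in G - \Phi(G)$, i.e.\ for every primitive $g$. I do not expect a serious obstacle, as both ingredients are standard; the only points needing care are the identification $V^g = V^{\langle g \rangle}$, which holds because fixing $g$ forces fixing all its powers, and the observation that the nonnegativity of the integer $\dim V^g$ is what upgrades ``the sum is zero'' to ``the fixed space is genuinely trivial.''
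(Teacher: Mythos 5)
Your proposal is correct and follows essentially the same route as the paper: both arguments reduce the fixed-vector condition to the statement that $\op{Res}^G_{\langle g \rangle} V$ contains the trivial representation, detected by the vanishing of $\frac{1}{\op{ord}(g)}\sum_{i=1}^{\op{ord}(g)}\chi_V(g^i)$ (the paper phrases this as the inner product $\langle \chi_V|_{\langle g \rangle}, \chi_{\mathrm{triv}}\rangle$, while you derive it as the trace of the averaging idempotent onto $V^{\langle g \rangle}$ --- the same computation). Your explicit invocation of Lemma \ref{byesurj} to make primitivity intrinsic is a point the paper leaves to its surrounding discussion, but it is not a difference in approach.
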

\begin{proof}
Let $g$ be any primitive element of $G$ and assume that $g$ fixes a nonzero vector $v \in V$. Then every element of $\langle g \rangle$ fixes $v$, and in fact the entire subspace generated by $v$. Conversely, if $\langle g \rangle$ fixes a nontrivial subspace $W \subseteq V$, then in particular, $g$ fixes a nonzero vector. In the language of representation theory, this means that $g$ fixes a nonzero vector $v \in V$ if and only if $\op{Res}_{\langle g \rangle}^G V$ has a trivial subrepresentation, i.e.,
\[
\langle \left. \chi_{V} \right|_{\langle g \rangle} , \chi_{\mathrm{triv}} \rangle = \frac{1}{|\langle g \rangle|}\sum_{i=1}^{\op{ord}(g)}\chi_{V} (g^i) \neq 0, \]
where $\chi_\mathrm{triv}$ denotes the trivial character of $\langle g \rangle$. This proves the assertion.
\end{proof}

One of the important advantages of Equation (1) is that the value of the sum, when considered as a function in $g$, depends only on the conjugacy class to which $g$ belongs. To see why, recall that characters are class functions and that powers of conjugate elements are conjugate, i.e., if $g$ and $h$ are conjugate, then so are $g^k$ and $h^k$. Thus, we may consider the criterion laid out in Lemma \ref*{findimprim} on the level of conjugacy classes. That is, in order to verify whether Equation (1) holds for all primitive $g$ in a given conjugacy class, it suffices to verify the equation for just one primitive representative. 

Conveniently, primitivity is also a property of conjugacy classes; for any given conjugacy class, either every element is primitive or no elements are primitive. This is the content of the following lemma.

\begin{lemma} \label{primconj}
Suppose $G$ is a finitely-generated group, and suppose that $g, h \in G$ are conjugate elements. Then $g$ is primitive if and only if $h$ is primitive.
\end{lemma}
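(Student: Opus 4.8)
The plan is to reduce the statement to the fact that inner automorphisms of $F_n$ preserve primitivity, exploiting the surjectivity of the defining homomorphism to realize conjugation in $G$ as conjugation in $F_n$. Since conjugacy is a symmetric relation, it suffices to prove one implication: assuming $g$ is primitive, I will show that any conjugate $h = cgc^{-1}$ is also primitive. First I would unwind the definition of primitivity recalled in the introduction: by hypothesis there is a surjection $\phi : F_n \to G$ (with $n$ the rank of $G$) and a primitive element $x \in F_n$ with $\phi(x) = g$; equivalently, $x = \alpha(a_1)$ for some automorphism $\alpha$ of $F_n$ and some standard generator $a_1$.

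Next, using that $\phi$ is surjective, I would lift the conjugating element, choosing some $w \in F_n$ with $\phi(w) = c$. Conjugation by $w$, namely the map $\psi : y \mapsto w y w^{-1}$, is an inner automorphism of $F_n$. Because $\psi$ is an automorphism and $x$ is primitive, the element $\psi(x) = (\psi \circ \alpha)(a_1)$ is again the image of a standard generator under an automorphism, and is therefore primitive. Finally I would verify compatibility with $\phi$: since $\phi$ is a homomorphism, $\phi(\psi(x)) = \phi(w)\phi(x)\phi(w)^{-1} = c g c^{-1} = h$, so $h$ is realized as the image of the primitive element $\psi(x)$ under the \emph{same} surjection $\phi$, hence is primitive. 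The reverse implication follows immediately by symmetry, writing $g = c^{-1} h c$.

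I do not expect a serious obstacle here; the argument is short and the two points requiring care are both harmless. The first is that $\phi$ must be surjective so that the conjugator $c$ admits a lift $w \in F_n$, which is guaranteed by the definition of primitivity. The second is that primitivity is preserved under arbitrary automorphisms of $F_n$, which is immediate from the automorphism-based characterization of primitivity (composing an automorphism carrying a generator to $x$ with $\psi$ again carries that generator to $\psi(x)$). The one structural subtlety worth flagging is that the proof crucially keeps $g$ and $h$ attached to a single fixed surjection $\phi$; this is precisely what lets the entire argument take place inside a fixed rank-$n$ free group and invoke inner automorphisms of $F_n$, rather than forcing a comparison between two a priori unrelated surjections.
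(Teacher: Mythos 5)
Your proof is correct and is essentially the paper's argument: the paper's one-line proof invokes exactly the two facts you use, namely that conjugation (lifted to an inner automorphism of $F_n$) is an automorphism and that automorphisms of $F_n$ carry free bases to free bases, hence preserve primitivity. Your write-up simply makes explicit the lifting of the conjugator through the surjection $\phi$, which the paper leaves implicit.
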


\begin{proof}
Conjugation is an automorphism, and elements of free bases are precisely the images of any other free basis under an automorphism.
\end{proof}

Thus, assuming that it is known which conjugacy classes contain primitive elements, we have the following algorithm.

\begin{algorithm} \label{algorithm}
Let $G$ be a finite $p$-group. The following determines whether $G$ has Property \textit{II}.
\begin{enumerate}
    \item[(1)] Select an irreducible representation $\phi : G \rightarrow \op{Aut}(V)$ and let $\chi$ denote the character of this representation.
    \item[(2)] Select a conjugacy class with a primitive element and select an arbitrary representative $g$. According to Lemma \ref*{primconj}, $g$ is necessarily a primitive element.
    \item[(3)] Verify whether Equation (1) holds for $\chi$ and $g$. If it does not, then return to Step 1, unless there do not remain any irreducible representations, in which case $G$ does not have Property \textit{II}.
    \item[(4)] If there remain conjugacy classes with primitive elements, return to Step 2. Else, $G$ has Property \textit{II}.
\end{enumerate}
\end{algorithm}

Observe that if the character table and power map on the conjugacy classes (i.e., a map $f : C(G) \times \mathbb{Z} \rightarrow C(G)$, where $C(G)$ denotes the collection of conjugacy classes of $G$) of a group $G$ is given, then the algorithm above may be executed without further reference to $G$. This is convenient because both pieces of information are readily available for the $p$-groups in the Small Groups library that is provided by GAP. However, it still remains to determine which conjugacy classes contain primitive elements. Of course, Lemma \ref*{primconj} allows us to select an arbitrary representative of any conjugacy class and invoke Lemma \ref*{byesurj}, but the following lemma allows us to do so again knowing nothing more than the character table of $G$.

\begin{lemma}[Criterion for Primitivity] \label{findprim}
Let $p$ be a prime, $G$ be a $p$-group, and $g \in G$ be an element. Then $g$ is primitive if and only if there exists a linear character $\chi : G \rightarrow \mathbb{C}^\times$ such that
\begin{enumerate}
    \item[(1)] $\chi(g) \neq 1$, and
    \item[(2)] for all $h \in G$, $\op{ord}(\chi(g)) \geq \op{ord}(\chi(h))$.
\end{enumerate}
\end{lemma}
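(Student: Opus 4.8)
The plan is to reduce the statement to a computation in the abelianization $A := G/[G,G]$, exploiting two facts: that primitivity is equivalent to lying outside the Frattini subgroup (Lemma \ref{byesurj}), and that for a finite $p$-group the Frattini subgroup has the explicit form $\Phi(G) = G^p[G,G]$. First I would record that every linear character $\chi : G \to \mathbb{C}^\times$ factors through $A$, which is a finite abelian $p$-group, and that since $[G,G] \subseteq \Phi(G)$ the image of $\Phi(G)$ in $A$ is exactly the subgroup $A^p$ of $p$-th powers. Consequently $g$ is primitive if and only if $g \notin \Phi(G)$, which holds if and only if the image $\bar g$ of $g$ in $A$ is not a $p$-th power.

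Next I would reinterpret condition (2). The image of any linear character is a finite subgroup of $\mathbb{C}^\times$, hence cyclic, and since $G$ is a $p$-group this image has order $p^k$ for some $k$. The maximum of $\op{ord}(\chi(h))$ over $h \in G$ is therefore exactly the order of $\op{im}(\chi)$, so condition (2) says precisely that $\chi(g)$ generates $\op{im}(\chi)$; together with condition (1) this amounts to saying that $\chi$ is nontrivial and that $\chi(g)$ is a generator of its (cyclic, $p$-power-order) image.

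For the forward direction I would invoke the structure theorem to write $A \cong \bigoplus_{i} \mathbb{Z}/p^{e_i}$ with each $e_i \geq 1$. Saying $\bar g \notin A^p$ means that in some coordinate $j$ the component $a_j$ of $\bar g$ is not divisible by $p$, hence is a unit and generates the factor $\mathbb{Z}/p^{e_j}$. Projecting onto that factor and composing with an isomorphism $\mathbb{Z}/p^{e_j} \xrightarrow{\sim} \mu_{p^{e_j}}$ sending $1$ to a primitive root of unity then yields a linear character $\chi$ whose image is $\mu_{p^{e_j}}$ and with $\chi(g)$ a primitive $p^{e_j}$-th root of unity, i.e.\ a generator, so both conditions hold. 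For the converse I would argue contrapositively: if $g \in \Phi(G)$ then $\bar g = b^p$ for some $b \in A$ (using that $A^p$ coincides with the literal set of $p$-th powers because $A$ is abelian), whence $\chi(g) = \chi(b)^p$ is a $p$-th power in the cyclic group $\op{im}(\chi)$ and therefore cannot generate it, since a generator of a nontrivial cyclic $p$-group is never a $p$-th power. This forces $\op{ord}(\chi(g))$ to be strictly smaller than the order of $\op{im}(\chi)$ and so violates condition (2) for every nontrivial $\chi$.

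I expect the only real subtlety to be the clean translation of condition (2) into the statement ``$\chi(g)$ generates the image of $\chi$,'' which hinges on recognizing that the image is cyclic of $p$-power order. Once this is in hand, both directions are short: the forward direction is an explicit construction via the structure theorem, and the converse is a one-line order estimate. A minor point worth stating carefully is the identification of $A^p$ with the set of $p$-th powers, which is exactly what makes the expression $\bar g = b^p$ available in the converse.
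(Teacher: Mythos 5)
Your proof is correct, and its overall skeleton matches the paper's: both arguments reduce primitivity to the condition $g \notin \Phi(G)$ via Lemma \ref{byesurj}, both rest on the identity $\Phi(G) = G^p[G,G]$ for finite $p$-groups, and your contrapositive step (a $p$-th power can never generate a nontrivial cyclic $p$-group) is, up to packaging, exactly the paper's order-drop computation $\op{ord}(\chi(xy^p)) = p^{k-1} < p^k = \op{ord}(\chi(y))$ --- the only difference being that your witness violating condition (2) is a preimage of a generator of $\op{im}(\chi)$ rather than the element $y$ itself. Where you genuinely diverge is the construction of the certifying character for a primitive $g$: the paper factors through the Frattini quotient $G/\Phi(G)$, an $\mathbb{F}_p$-vector space, so its character has order exactly $p$ and condition (2) is immediate because every value is then a $p$-th root of unity; you instead invoke the structure theorem for the full abelianization $A = G/[G,G]$ and project onto a cyclic factor $\mathbb{Z}/p^{e_j}$ in which the component of $\bar g$ is a unit, producing a character whose image may be the larger group $\mu_{p^{e_j}}$, generated by $\chi(g)$. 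The paper's route is lighter (plain linear algebra over $\mathbb{F}_p$, no structure theorem), while yours buys a cleaner conceptual reformulation --- condition (2) holds precisely when $\chi(g)$ generates the cyclic group $\op{im}(\chi)$ --- which makes both implications transparent and explains why (2) is the right way to encode ``maximal order.'' One point to flag: the identity $\Phi(G) = G^p[G,G]$ is invoked without proof by you, just as it is implicitly by the paper (in the step writing $g \in \Phi(G) - [G,G]$ as $xy^p$); it is standard, but in a self-contained write-up it deserves a citation or a short argument.
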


\begin{proof}
We begin by assuming the conditions (1) and (2) hold, and show that $g$ must then be primitive. Let $\chi$ be a linear character satisfying the two conditions and recall from character theory that the commutator subgroup $[G, G]$ of $G$ is the intersection of the kernels of the linear characters of $G$. Thus, (1) implies that $g$ lies in the complement of the commutator subgroup. However, the commutator subgroup is in general a strict subgroup of the Frattini subgroup, and according to Lemma 6, it remains to be shown that $x \not\in \Phi(G) - [G, G]$. This is where (2) comes in.

Suppose for contradiction that $g\in \Phi(G) - [G, G]$, i.e., that $g$ is of the form $xy^p$ for some $x \in [G, G]$ and nontrivial $y \in G$. Then $\chi(x) = 0$ because $x \in [G, G]$, so
\[ \chi(g) = \chi(xy^p) = \chi(x)\chi(y)^p = \chi(y)^p. \]
Since $G$ is a finite $p$-group and $\chi(g) \neq 1$, $\op{ord}(\chi(y)) = p^k$ for some $k > 0$, and $\op{ord}(\chi(g)) = p^{k-1}$. Taking $h$ in (2) to be $y$, we now have a contradiction. This shows that $g \not\in \Phi(G)$, and by Lemma 6, $g$ is primitive.

Conversely, assume that $g_1$ is primitive, and consider the image $\bar{g}_1$ under the canonical projection $\pi : G \rightarrow G / \Phi(G)$. 
By the Burnside Basis Theorem, $G / \Phi(G)$ is an $\mathbb{F}_p$-vector space, and by Lemma 7, $\bar{g}_1$ is not the zero element. Hence, there exist elements $\bar{g}_2, \dots, \bar{g}_n \in G / \Phi(G)$ such that $\bar{g}_1, \bar{g}_2, \dots, \bar{g}_n$ is a basis for $G / \Phi(G)$. This basis provides the following direct sum decomposition of $G / \Phi(G)$.
\[
G / \Phi(G) \cong \langle \bar{g_1} \rangle \oplus \langle \bar{g}_2 \rangle \oplus \dots \oplus \langle \bar{g}_n \rangle.
\]
Let $\chi_0 : G / \Phi(G) \rightarrow \mathbb{C}^\times$ be the any linear character mapping $\bar{g}_1$ to $\omega = e^{2\pi i/p}$. Then $\chi = \chi_0 \circ \pi$ is a linear character on $G$ mapping $g_1$ to $\omega \neq 1$. This character also satisfies the second condition because for any element $h \in G$,
\[ \op{ord}(\chi(h)) = \op{ord}(\chi_0(\bar{h})) \leq p = \op{ord}(\chi_0(\bar{g}_1)) = \op{ord}(\chi(g_1)). \]
This proves the converse.
\end{proof}

Observe that while the above lemma is for a specific element $g$, the criterion itself only involves the values of characters. Thus, Lemma \ref*{findprim} may be used to determine which conjugacy classes consist of primitive elements using only the character table of the group. With this information in hand, it becomes possible to apply Lemma \ref*{findimprim} to search through the irreps of a group for an imprimitive irrep, and this is precisely what Algorithm \ref*{algorithm} accomplishes.

\section{Results}\label{results}

\FloatBarrier

As mentioned earlier, the Small Groups library (smallgrp)---a GAP package---provides a database of lower order $p$-groups. We conducted a thorough search through this database among the groups of rank $\geq 3$ for examples of Property \textit{II}, and were able to find a total of 173. Moreover, all 173 are of nilpotence degree $\geq 3$. Table \ref*{Tab:grouppiichart} provides a breakdown of these 173 groups by order and nilpotence degree.

\begin{table}[ht]
\centering
\begin{tabular}{|l|l|l|l|}
\hline
Order & Total        & 3-step & 4-step \\ \hline
128   & \textbf{1}   & 1      & 0      \\ \hline
256   & \textbf{7}   & 7      & 0      \\ \hline
512   & \textbf{165} & 153    & 12     \\ \hline
\end{tabular}
\caption{Rank $3$ Groups with Property \textit{II} by order and nilpotence degree}
\label{Tab:grouppiichart}
\end{table}

Notice from the data that all groups found were finite $2$-groups, and that 128 is the smallest order for which there exists a rank 3 group with Property \textit{II}. Moreover, there is only one group of order 128. In the sequel, we refer to this group as $G_{128}$. In the section to follow we analyze this group and prove using Algorithm \ref*{algorithm} that $G_{128}$ has Property \textit{II}. More information about the other Property \textit{II} groups that we found, including some groups of rank 2, may be found in the appendix at the end of this document.

\FloatBarrier

\section{The Group of Order 128}\label{g128}

In this section, we provide a brief description of $G_{128}$, the only rank 3 group of order 128 with Property \textit{II}. First of all, $G_{128}$ is 3-step nilpotent, and its Frattini subgroup $\Phi(G_{128})$ is isomorphic to $\Z_4 \times \Z_2 \times \Z_2$, an abelian group. Because $G_{128}$ is of rank 3, we also have that the quotient $G_{128} / \Phi(G_{128})$ is isomorphic to $\Z_2 \times \Z_2 \times \Z_2$. We therefore have the short exact sequence below.
\[
    \begin{tikzcd}
        0 \arrow[r] &
        \Z_4 \times \Z_2 \times \Z_2 \arrow[r] &
        G_{128} \arrow[r] & 
        \Z_2 \times \Z_2 \times \Z_2 \arrow[r] &
        0.
    \end{tikzcd}
\]
This shows $G_{128}$ to be a metabelian extension of $\Z_2 \times \Z_2 \times \Z_2$ by $\Z_4 \times \Z_2 \times \Z_2$. We have verified that this extension is nonsplit. In the display below, we provide one presentation of the group, adopting the convention that $[x,y] = xyx^{-1}y^{-1}$.
\[
G_{128} =
\Biggl\langle 
   \begin{array}{l|cl}
              & a^4 = b^4 = c^2, \\
        a,b,c & [[a, b], b] = [[b, c], c] = a^4[[a, c], a] = 1, \\
              & b^2 = a^4[a, b], a^2 b^2 = [a, c], a^8 = 1  &                                             
    \end{array}
\Biggr\rangle.
\]

In Table \ref*{Tab:chartab}, we provide the character table. As is standard, the irreducible representations of $G_{128}$ are listed vertically in the leftmost column and labeled \texttt{X.1} through \texttt{X.23}. Similarly, the top row of positive integers numbers the conjugacy classes from \texttt{1} to \texttt{23}, and the next row down marks with a \texttt{p} those that are primitive. From each conjugacy class, we have selected one representative and listed these representatives in Table \ref*{Tab:conjreps}.

\begin{table}[ht]
\footnotesize
\begin{verbatim}
         1  2  3  4  5  6  7  8  9  10 11 12 13 14 15 16 17 18 19 20 21 22 23
            p  p  p              p  p  p  p  p  p           p  p  p  p     p

X.1      1  1  1  1  1  1  1  1  1  1  1  1  1  1  1  1  1  1  1  1  1  1  1
X.2      1 -1  1  1  1  1  1  1 -1 -1 -1  1  1  1  1  1  1 -1 -1 -1  1  1 -1
X.3      1  1 -1  1  1  1  1  1 -1  1  1 -1 -1  1  1  1  1 -1 -1  1 -1  1 -1
X.4      1 -1 -1  1  1  1  1  1  1 -1 -1 -1 -1  1  1  1  1  1  1 -1 -1  1  1
X.5      1  1  1 -1  1  1  1  1  1 -1  1 -1  1 -1  1  1  1 -1  1 -1 -1  1 -1
X.6      1 -1  1 -1  1  1  1  1 -1  1 -1 -1  1 -1  1  1  1  1 -1  1 -1  1  1
X.7      1  1 -1 -1  1  1  1  1 -1 -1  1  1 -1 -1  1  1  1  1 -1 -1  1  1  1
X.8      1 -1 -1 -1  1  1  1  1  1  1 -1  1 -1 -1  1  1  1 -1  1  1  1  1 -1
X.9      2  .  .  2 -2  2  2  2  .  .  .  .  . -2 -2 -2  2  .  .  .  . -2  .
X.10     2  .  . -2 -2  2  2  2  .  .  .  .  .  2 -2 -2  2  .  .  .  . -2  .
X.11     2  .  2  .  2 -2  2  2  .  .  .  . -2  . -2  2 -2  .  .  .  . -2  .
X.12     2  . -2  .  2 -2  2  2  .  .  .  .  2  . -2  2 -2  .  .  .  . -2  .
X.13     2  .  .  . -2 -2  2  2  .  .  .  A  .  .  2 -2 -2  .  .  . -A  2  .
X.14     2  .  .  . -2 -2  2  2  .  .  . -A  .  .  2 -2 -2  .  .  .  A  2  .
X.15     2  2  .  .  2  2 -2  2  .  . -2  .  .  .  2 -2 -2  .  .  .  . -2  .
X.16     2 -2  .  .  2  2 -2  2  .  .  2  .  .  .  2 -2 -2  .  .  .  . -2  .
X.17     2  .  .  . -2  2 -2  2  .  A  .  .  .  . -2  2 -2  .  . -A  .  2  .
X.18     2  .  .  . -2  2 -2  2  . -A  .  .  .  . -2  2 -2  .  .  A  .  2  .
X.19     2  .  .  .  2 -2 -2  2  A  .  .  .  .  . -2 -2  2  . -A  .  .  2  .
X.20     2  .  .  .  2 -2 -2  2 -A  .  .  .  .  . -2 -2  2  .  A  .  .  2  .
X.21     2  .  .  . -2 -2 -2  2  .  .  .  .  .  .  2  2  2 -2  .  .  . -2  2
X.22     2  .  .  . -2 -2 -2  2  .  .  .  .  .  .  2  2  2  2  .  .  . -2 -2
X.23*    8  .  .  .  .  .  . -8  .  .  .  .  .  .  .  .  .  .  .  .  .  .  .

A = 2i and . = 0
p = primitive elements
* = imprimitive representation
\end{verbatim}
\caption{Character Table for $G_{128}$}
\label{Tab:chartab}
\end{table}

\FloatBarrier

\begin{table}[ht]
    \centering
    \begin{tabular}{|c|c|c|c|}
        \hline
        Conjugacy Class & Representative & Conjugacy Class & Representative  \\ \hline
        \texttt{1}      & $1$            & \texttt{13}  & $b[a,c]a^4$        \\ \hline
        \texttt{2}      & $a$            & \texttt{14}  & $c[a,b]$           \\ \hline
        \texttt{3}      & $b$            & \texttt{15}  & $[a,b][a,c]a^4$    \\ \hline
        \texttt{4}      & $c$            & \texttt{16}  & $[a,b][b,c]a^4$    \\ \hline
        \texttt{5}      & $[a,b]$        & \texttt{17}  & $[a,c]a^4[b,c]a^4$ \\ \hline
        \texttt{6}      & $[a,c]a^4$     & \texttt{18}  & $abc$              \\ \hline
        \texttt{7}      & $[b,c]a^4$     & \texttt{19}  & $ab[a,c]a^4$       \\ \hline
        \texttt{8}      & $a^4$          & \texttt{20}  & $ac[a,b]$          \\ \hline
        \texttt{9}      & $ab$           & \texttt{21}  & $bc[a,b]$          \\ \hline 
        \texttt{10}     & $ac$           & \texttt{22}  & $[a,b][a,c][b,c]$  \\ \hline
        \texttt{11}     & $a[b,c]a^4$    & \texttt{23}  & $abc[a,b]$         \\ \hline
        \texttt{12}     & $bc$           & ---          & ---                \\ \hline
    \end{tabular}
    \caption{Representatives of the Conjugacy Classes in Table \ref*{Tab:chartab}}
    \label{Tab:conjreps}
\end{table}

\FloatBarrier

By definition, to say that $G_{128}$ has Property \textit{II} is to say that $G$ has an imprimitive irreducible representation. As it turns out, there is exactly one such irrep, and this is the irrep corresponding to the final row, \texttt{X.23}. (This irrep is induced from the alternating irreducible representation of $\langle a^4 \rangle$, a subgroup of order $2$.) Let $\chi_{23} : G_{128} \rightarrow \mathbb{C}$ denote the corresponding character on $G_{128}$. By Lemma \ref*{findimprim}, it suffices to show that:
\begin{equation} \label{g128comp}
    \sum_{i=1}^{\op{ord}(g)}\chi_{23} (g^i)=0,
\end{equation}
for all primitive elements $g$.

For every primitive conjugacy class, one can verify using the provided group presentation that raising any representative to a sufficiently large power yields an element in Conjugacy Class \texttt{8}, the conjugacy class of $a^4$. Since $\chi_{23}$ vanishes on every other conjugacy class except for the conjugacy class of the identity element, the sum on the left-hand side of (\ref*{g128comp}) reduces to
\[ \sum_{i=1}^{\op{ord}(g)}\chi_{23} (g^i) = 0 + \dots + 0 + (-8) + 0 + \dots + 0 + 8=0, \]
for all primitive $g$.
Hence, $G_{128}$ has Property \textit{II}.

For reference, these are the values that $\chi_{23}$ takes on the generators $a$, $b$, and $c$ of $G_{128}$.

\[
\chi_{23}(a) =
\begin{pmatrix}
0 & 1 & 0 & 0  & 0 & 0 & 0  & 0 \\
i & 0 & 0 & 0  & 0 & 0 & 0  & 0 \\
0 & 0 & 0 & 0  & 1 & 0 & 0  & 0 \\
0 & 0 & 0 & 0  & 0 & 1 & 0  & 0 \\
0 & 0 & i & 0  & 0 & 0 & 0  & 0 \\
0 & 0 & 0 & -i & 0 & 0 & 0  & 0 \\
0 & 0 & 0 & 0  & 0 & 0 & 0  & 1 \\
0 & 0 & 0 & 0  & 0 & 0 & -i & 0
\end{pmatrix}.
\]
\[
\chi_{23}(b) =
\begin{pmatrix}
0  & 0  & 1 & 0 & 0  & 0  & 0 & 0 \\
0  & 0  & 0 & 0 & -i & 0  & 0 & 0 \\
-i & 0  & 0 & 0 & 0  & 0  & 0 & 0 \\
0  & 0  & 0 & 0 & 0  & 0  & 1 & 0 \\
0  & -1 & 0 & 0 & 0  & 0  & 0 & 0 \\
0  & 0  & 0 & 0 & 0  & 0  & 0 & i \\
0  & 0  & 0 & i & 0  & 0  & 0 & 0 \\
0  & 0  & 0 & 0 & 0  & -1 & 0 & 0
\end{pmatrix}.
\]
\[
\chi_{23}(c) =
\begin{pmatrix}
0  & 0  & 0  & 1 & 0  & 0 & 0 & 0 \\
0  & 0  & 0  & 0 & 0  & 1 & 0 & 0 \\
0  & 0  & 0  & 0 & 0  & 0 & 1 & 0 \\
-1 & 0  & 0  & 0 & 0  & 0 & 0 & 0 \\
0  & 0  & 0  & 0 & 0  & 0 & 0 & 1 \\
0  & -1 & 0  & 0 & 0  & 0 & 0 & 0 \\
0  & 0  & -1 & 0 & 0  & 0 & 0 & 0 \\
0  & 0  & 0  & 0 & -1 & 0 & 0 & 0 
\end{pmatrix}.
\]

\FloatBarrier

\section{Appendix}

In this section, we record the examples of Property \textit{II} that our search uncovered. Table \ref*{rank3} lists all rank 3 groups with Property \textit{II} of order $\leq 1000$, and Table \ref*{rank2} lists some examples of rank 2. (We emphasize that in contrast to Table \ref*{rank3}, this list is not comprehensive.) In both tables, the groups are sorted by order and referred to by their index in the aforementioned Small Groups library, as indexed in GAP 4.11.0 \cite{GAP4}.

\begin{table}[h]
\caption{Rank 3 Groups with Property \textit{II}}
\begin{tabularx}{1\textwidth}{|c|X|}
\hline
 Order & Index\\
 \hline
 128 & 802  \\
 \hline
 256 & 3592, 3594, 3596, 3598, 4128, 4240, 4300\\
 \hline
512 &  12383, 12384, 12386, 12387, 12390, 12393, 12395, 12396, 12401, 12402, 12403, 12404, 24190, 24224, 24226, 
 24227, 24270, 24272, 24273, 24362, 24363, 24366, 24367, 24370, 24412, 24413, 24416, 24424, 24426, 24427, 
 24430, 24431, 24438, 24440, 24441, 24516, 24518, 24519, 24522, 24523, 24530, 24532, 24533, 24632, 24633, 
 24636, 24642, 24643, 24646, 24724, 24725, 24728, 24734, 24735, 24738, 24823, 24824, 24826, 24880, 24884, 
 24886, 24926, 24930, 24932, 25031, 25033, 25035, 25049, 25051, 25053, 25155, 25157, 25167, 25169, 25271, 
 25273, 25283, 25285, 25375, 25377, 25387, 25389, 25432, 25434, 25444, 25446, 25534, 25536, 25538, 25548, 
 25550, 25677, 25679, 25681, 25691, 25693, 25780, 25782, 25795, 25797, 27373, 27402, 27404, 27432, 27434, 
 27467, 27468, 27469, 27514, 27515, 27516, 27537, 27538, 27539, 27554, 27556, 27581, 27582, 27590, 27592, 
 27593, 27605, 27606, 27628, 27630, 27649, 27650, 27651, 27676, 27677, 27679, 27718, 27719, 27752, 27753, 
 27792, 27793, 27794, 28263, 28296, 28297, 28326, 28327, 28354, 28355, 28439, 28440, 28505, 28506, 28507, 
 30769, 31035, 31198, 53188, 53189, 53190, 53191, 53192, 53193, 53194, 53195, 53198, 53199, 56025, 56474 \\
\hline
\end{tabularx}
\label{rank3}
\end{table}

\begin{table}
\caption{A Sampling of Rank 2 Groups with Property \textit{II}}
\begin{tabularx}{1\textwidth}{|c|X|}
\hline
 Order & Index\\
\hline
8 & 4\\
\hline
16 & 4, 9\\
\hline
32 & 2, 8, 10, 12, 13, 14, 15, 20\\
\hline
64 & 5, 7, 9, 11, 13, 14, 15, 16, 17, 18, 19, 20, 21, 22, 23, 24, 25, 39, 43, 44, 45, 46, 47, 48, 49, 54 \\
\hline
128 & 3, 4, 5, 6, 7, 8, 9, 10, 11, 12, 13, 14, 15, 16, 17, 18, 19, 20, 21, 22, 23, 24, 25, 26, 27, 28, 29, 30, 
 31, 32, 33, 34, 35, 36, 37, 38, 39, 40, 41, 54, 55, 56, 57, 58, 59, 60, 64, 66, 69, 70, 80, 82, 83, 84, 85, 
 86, 90, 94, 95, 96, 97, 98, 99, 100, 101, 102, 103, 104, 105, 106, 107, 108, 109, 110, 111, 112, 113, 114, 
 115, 116, 117, 118, 119, 120, 121, 122, 123, 124, 125, 126, 127, 143, 148, 152, 153, 154, 155, 156, 157, 
 158, 163 \\
\hline
256 & 2, 3, 4, 5, 6, 7, 8, 9, 10, 11, 12, 13, 14, 15, 16, 17, 18, 19, 20, 21, 22, 23, 24, 25, 26, 27, 28, 29, 30, 
 31, 32, 33, 34, 35, 36, 37, 38, 50, 51, 52, 53, 54, 55, 57, 59, 61, 63, 65, 66, 69, 70, 73, 74, 77, 78, 81, 
 83, 85, 86, 87, 88, 89, 96, 101, 106, 107, 108, 109, 110, 111, 112, 113, 114, 115, 116, 117, 118, 119, 120, 
 121, 122, 123, 124, 125, 126, 127, 128, 129, 130, 131, 132, 133, 134, 135, 136, 137, 138, 139, 140, 141, 
 142, 143, 144, 145, 146, 147, 148, 149, 150, 151, 152, 153, 154, 155, 156, 157, 158, 159, 160, 161, 162, 
 163, 164, 165, 166, 167, 168, 169, 170, 171, 172, 173, 174, 175, 176, 177, 178, 179, 180, 181, 182, 183, 
 184, 185, 186, 187, 188, 189, 190, 191, 192, 193, 194, 195, 196, 197, 198, 199, 200, 201, 202, 203, 204, 
 205, 206, 207, 208, 209, 210, 211, 212, 213, 214, 215, 216, 217, 218, 219, 220, 221, 222, 223, 224, 225, 
 226, 227, 228, 229, 230, 231, 232, 233, 234, 235, 236, 237, 238, 239, 240, 241, 242, 243, 244, 245, 246, 
 247, 248, 249, 250, 251, 252, 253, 254, 255, 256, 257, 258, 259, 260, 261, 262, 263, 264, 265, 266, 267, 
 268, 269, 270, 271, 272, 273, 274, 275, 276, 277, 278, 279, 280, 281, 282, 283, 284, 285, 286, 287, 288, 
 289, 290, 291, 292, 293, 294, 295, 296, 297, 298, 299, 300, 301, 302, 303, 304, 305, 306, 307, 308, 309, 
 310, 311, 312, 313, 314, 315, 329, 331, 343, 350, 351, 352, 353, 354, 355, 356, 357, 358, 359, 360, 361, 
 362, 363, 364, 365, 366, 372, 374, 380, 381, 389, 393, 407, 409, 414, 415, 416, 417, 418, 419, 420, 421, 
 422, 423, 424, 425, 426, 427, 428, 429, 430, 431, 433, 437, 438, 439, 440, 441, 442, 443, 444, 445, 446, 
 447, 448, 449, 450, 451, 452, 453, 454, 455, 456, 457, 458, 459, 460, 461, 462, 463, 464, 465, 466, 467, 
 468, 469, 470, 471, 472, 473, 474, 475, 476, 477, 478, 479, 480, 481, 482, 483, 484, 485, 486, 487, 488, 
 489, 490, 491, 492, 493, 494, 495, 496, 526, 530, 531, 532, 533, 534, 535, 536, 541 \\
\hline
512 & Not checked\\
\hline
729 & 6, 8, 10, 11, 12, 14, 15, 18, 21, 26, 52, 53, 54, 55, 78, 79, 80, 81, 82, 83, 96, 101 \\
\hline
\end{tabularx}
\label{rank2}
\end{table}

\FloatBarrier

\section*{Acknowledgements}
The results presented were obtained as part of the Summer 2020 Columbia Math Undergraduate Summer Research Program at Columbia University. First of all, we would like to express gratitude to the Department of Mathematics at Columbia University and Michael Woodbury for organizing the program. In particular, we extend special thanks to our advisors Nick Salter and Maithreya Sitaraman for their thoughtful guidance throughout the research process. More specifically, we thank Nick Salter for sharing with us both his knowledge and passion for the field and his careful comments on the many revisions this document underwent, and we thank Maithreya Sitaraman for the stimulating discussions and ideas he shared with us. Finally, Nick would like to thank Benson Farb, Sebastian Hensel, Justin Malestein, and Andy Putman for some very helpful correspondence on the topic.

\printbibliography

\noindent\emph{E-mail addresses:}
\begin{itemize}
    \item[] DL: \email{dll2141@columbia.edu}
    \item[] IRS: \email{igr2102@columbia.edu}
    \item[] JS: \email{js5142@columbia.edu}
    \item[] AT: \email{ast2175@columbia.edu}
\end{itemize}

\end{document}